\newtheorem{theorem}{Theorem}[section]
\newtheorem{lemma}[theorem]{Lemma}
\theoremstyle{definition}
\newtheorem{definition}[theorem]{Definition}
\theoremstyle{remark}
\newtheorem{remark}[theorem]{Remark}
\numberwithin{equation}{section}
\begin{document}
\setcounter{page}{1}

%-------------------------- Pleased do not change the following line-------------------------------------------
\noindent \textcolor[rgb]{0.99,0.00,0.00}{This is a submission to one of journals of TMRG: BJMA or AFA}\\[.5in]
%--------------------------------------------------------------------------------------------------------------

\title[$L^1$- convergence of greedy algorithm]{$L^1$- convergence of greedy algorithm by generalized Walsh system}

\author[Sergo A. Episkoposian ]{Sergo A. Episkoposian  (Yepiskoposyan)}

\address{ Faculty of Applied Mathematics , State Engeniering University of Armenia, Yerevan, Teryan st.105,  375049, Armenia.}

\email{\textcolor[rgb]{0.00,0.00,0.84}{sergoep@ysu.am}}

%\dedicatory{This paper is dedicated to Professor ABCD}

\subjclass[2010]{Primary 42A65; Secondary 42A20.}

\keywords{generalized Walsh system, monotonic coefficients, greedy algorithm.}

\date{Received: xxxxxx; Revised: yyyyyy; Accepted: zzzzzz.
\newline \indent $^{*}$ Corresponding author}

\begin{abstract}
In this paper we consider the generalized Walsh system and a problem
$L^1- convergence$ of  greedy algorithm of functions after changing
the values on small set .
\end{abstract} \maketitle

\section{Introduction and preliminaries}

\noindent Let $a$ denote a fixed integer, $a\geq 2$ and put $\omega_a=e^{2 \pi
i \over a}$. Now we will give the definitions of generalized Rademacher and Walsh
systems \cite{C}.

\begin{definition}\label{def-1.1}
 The Rademacher  system of order $a$ is defined
by
$$\varphi_0(x)=\omega_a^k\ \ if \ \ x \in \left[ {k\over a}, {k+1\over a}\right),\ \ k=0,1,...,a-1,\ \ x\in[0,1)$$
and for $n\geq 0$
$$\varphi_n(x+1)=\varphi_n(x)=\varphi_0(a^nx).$$
\end{definition}
\begin{definition}\label{def-1.2}
 The generalized Walsh system of order $a$ is
defined by
$$\psi_0(x)=1,$$
and if $n=\alpha_1a^{n_1}+...+\alpha_s a^{n_s}$ where $n_1>...>n_s,$
then
$$\psi_n(x)=\varphi_{n_1}^{\alpha_1}(x)\cdot...\cdot \varphi_{n_s}^{\alpha_s}(x) .$$
\end{definition}
 Let's denote the generalized Walsh system of order $a$ by $\Psi_{a}$.

Note that $\Psi_2$ is the classical Walsh system.

The basic properties of the generalized Walsh system of order $a$
are obtained by   H.E.Chrestenson, R. Pely, J. Fine, W. Young, C.
Vatari, N. Vilenkin and others (see \cite{C}- \cite{Y}).

In this paper we consider $L^1$- convergence of  greedy algorithm
with respect to $\Psi_a$ system. Now we present the definition of
greedy algorithm.

Let $X$ be a Banach space  with a norm $||\cdot||=||\cdot||_X$ and a
basis $\Phi=\{\phi_k\}_{k=1}^\infty$, $||\phi_k||_X=1$, $k=1,2,..$ .

For a function $f\in X$ we consider the expansion
$$f=\sum_{k=1}^\infty a_k(f)\phi_k \ \ .$$

\begin{definition}\label{def-1.3}
 Let an element $f\in X$ be given. Then the
$m$-th greedy approximant of the function $f$ with regard to the
basis $\Phi$ is given by
$$G_m(f,\phi)=\sum_{k\in\Lambda}a_k(f) \phi_k,$$
where $\Lambda \subset \{1,2,...\}$ is a set of cardinality $m$ such
that
 $$|a_n(f)|\geq |a_k(f)|, \ \  n\in \Lambda,\ \  k\notin
\Lambda. $$
\end{definition}
In particular we'll say that the greedy approximant of $f\in
L^p[0,1]$, $p\geq 0$ converges with regard to the
 $\Psi_a$ , if the sequence $G_m(x,f)$ converges to $f(t)$ in
$L^p$ norm. This new and very important direction invaded many
mathematician's attention (see \cite{D-T}-\cite{E-2}).

T.W. K$\ddot{o}$rner \cite{K} constructed an $L^2$ function (then a
continuous function) whose greedy algorithm with respect to
trigonometric systems diverges almost everywhere.

V.N.Temlyakov in \cite{T} constructed a function $f$ that belongs to
all $L^p$, $1\leq p<2$ (respectively $p>2$), whose greedy algorithm
concerning trigonometric systems divergence in measure (respectively
in $L^p$, $p>2$), e.i. the trigonometric system are not a
quasi-greedy basis for $L^p$ if $1<p<\infty$.

In \cite{G-N} R.Gribonval and M.Nielsen proved that for any $1<p<\infty$ there exits a
function $f(x)\in L^p[0,1)$ whose greedy algorithm with respect to
$\Psi_2$- classical Walsh system diverges in $L^p[0,1]$.
Moreover, similar result for $\Psi_a$ system follows from Corollary
2.3. (see \cite{G-N}). Note also that in  \cite{E-1} and \cite{E-2} this result was
proved  for $L^1[0,1]$.

The following question arises naturally: is it possible to change
the values of any function $f$ of class $L^1$ on small set, so that
a greedy algorithm of new modified function concerning $\Psi_a$
system converges in the $L^1$ norm?

The classical {\bf C}-property of Luzin is well-known, according to
which every measurable function can be converted into a continuous
one be changing it on a set of arbitrarily small measure. This
famous result of Luzin \cite{L} dates back to 1912.

 Note that Luzin's idea of modification of a function improving
its properties was substantially developed later on.

In 1939, Men'shov \cite{M1} proved the following fundamental theorem.

\textbf{Theorem (Men'shov's $C$-strong property).}\label{Men'shov'}
\textit{\ Let
$f(x)$ be an a.e. finite measurable function on $[0,2\pi]$. Then for
each $\varepsilon>0$ one can define a continuous function $g(x)$
coinciding with $f(x)$ on a subset $E$ of measure
$|E|>2\pi-\varepsilon$ such that its Fourier series with respect to
the trigonometric system converges uniformly on $[0,2\pi]$.}

Further interesting results in this direction were obtained by many
famous mathematicians (see for example \cite{M2}-\cite{Pr}).

Particulary in 1991 M. Grigorian obtain the following result \cite{G1}:

\textbf{Theorem ($L^1$-strong property).}\label{Grigorian}
\textit{\ For each
$\varepsilon>0$ there exits a measurable set $E\subset [0,2\pi]$ of
measure $|E|>2\pi-\varepsilon$ such that for any function $f(x)\in
L^1[0,2\pi]$ one can find a function $g(x)\in L^1[0,2\pi]$
coinciding with $f(x)$ on $E$ so that its Fourier series with
respect to the trigonometric system converges to $g(x)$ in the
metric of $L^1[0,2\pi]$.}

In this paper we prove the following:

\begin{theorem}\label{main-1.4}
For any $\varepsilon\in (0,1)$ and for any function $f\in L^1[0,1)$
there is a function $g\in L^1[0,1)$, with $mes\{x\in [0,1)\ ;\ g\neq f\}<\varepsilon$, such that the nonzero fourier coefficients by absolute values
monotonically decreasing.
\end{theorem}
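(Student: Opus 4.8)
The plan is to construct $g$ as a series $g=\sum_{j\ge 1}P_{j}$ of generalized Walsh polynomials whose spectra fill consecutive, pairwise disjoint, increasing blocks $[m_{j-1},m_{j})$ of indices, arranged so that $\sum_{j}\|P_{j}\|_{L^{1}}<\infty$; then $g\in L^{1}[0,1)$, the series converges to $g$ in $L^{1}$ and a.e., and --- the blocks being disjoint and ordered --- the Walsh--Fourier coefficient sequence of $g$ is exactly the concatenation of those of $P_{1},P_{2},\dots$. If in addition every nonzero coefficient of $P_{j}$ has one and the same modulus $\gamma_{j}$, and $\gamma_{1}\ge\gamma_{2}\ge\cdots$, then the nonzero coefficients of $g$ are non‑increasing in absolute value, which is the assertion. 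This simultaneously disposes of the $L^{1}$‑convergence of the greedy algorithm mentioned in the introduction: since the coefficients are non‑increasing, for each $m$ the approximant $G_{m}(g,\Psi_{a})$ equals $P_{1}+\cdots+P_{j-1}$ plus an $r$‑term sub‑sum of $P_{j}$ for suitable $j=j(m)\to\infty$, so that $\|g-G_{m}(g,\Psi_{a})\|_{L^{1}}\le\|P_{j}\|_{L^{2}}+\sum_{i>j}\|P_{i}\|_{L^{1}}$, and it will suffice to arrange $\|P_{j}\|_{L^{1}}\to0$ and $\|P_{j}\|_{L^{2}}\to0$.

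First I would reduce to $f\in L^{\infty}[0,1)$: replacing $f$ by $f\chi_{\{|f|\le M\}}$ changes it on a set of measure $<\varepsilon/2$ once $M$ is large, so it suffices to correct a bounded function (the correction will disagree with it on a further set of measure $<\varepsilon/2$). Next, writing $E_{N}$ for the conditional expectation onto the $a$‑adic intervals of rank $N$ (equivalently, the Walsh partial‑sum operator $S_{a^{N}}$), choose ranks $N_{1}<N_{2}<\cdots$ with $\|E_{N_{j}}f-f\|_{L^{1}}<2^{-j}$ and set $\tau_{j}:=E_{N_{j}}f-E_{N_{j-1}}f$ (with $E_{N_{0}}f:=0$). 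Each $\tau_{j}$ is a step function, constant on $a$‑adic intervals of rank $N_{j}$, with $\|\tau_{j}\|_{L^{1}}<3\cdot 2^{-j}$ and $\|\tau_{j}\|_{\infty}\le 2M$, and $\sum_{j\le J}\tau_{j}=E_{N_{J}}f\to f$ a.e. Hence, if for every $j$ one can find $P_{j}$ with $P_{j}=\tau_{j}$ off a set $B_{j}$ of measure $<\varepsilon2^{-j}/2$, then $g=\sum_{j}P_{j}$ coincides with $\sum_{j}\tau_{j}=f$ off $\bigcup_{j}B_{j}$, a set of measure $<\varepsilon/2$.

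The engine is a correction lemma for step functions, which I would state as follows: for every step function $\tau$ (constant on $a$‑adic intervals of some rank), every $\varepsilon'>0$, every $\gamma>0$, and every $m_{0}\in\mathbb{N}$, there exist $m_{1}>m_{0}$, a generalized Walsh polynomial $P=\sum_{k=m_{0}}^{m_{1}-1}c_{k}\psi_{k}$, and a measurable set $B$ with $|B|<\varepsilon'$ such that $P(x)=\tau(x)$ for $x\notin B$, every nonzero $c_{k}$ has modulus exactly $\gamma$ (so the coefficients are non‑increasing across the block --- one may even force a strictly decreasing profile of moduli), $\|P\|_{L^{1}}\le 3\|\tau\|_{L^{1}}$, and $\|P\|_{L^{2}}$ is controlled by $\|\tau\|_{\infty}$, $\|\tau\|_{L^{1}}$ and $1/\varepsilon'$, say $\|P\|_{L^{2}}^{2}\le 3\|\tau\|_{\infty}\|\tau\|_{L^{1}}+3\|\tau\|_{L^{1}}^{2}/\varepsilon'$. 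Granting this, apply it to each $\tau_{j}$ with $\varepsilon'=\varepsilon2^{-j}/2$, with $m_{0}=m_{j-1}$ the right end of the previous block, and with the common modulus $\gamma_{j}$ taken small enough that $\gamma_{1}\ge\gamma_{2}\ge\cdots$; then $\|P_{j}\|_{L^{1}}<9\cdot2^{-j}$ and $\|P_{j}\|_{L^{2}}^{2}\le C(M+\varepsilon^{-1})2^{-j}\to0$, and assembling the $P_{j}$ as in the first paragraph produces $g$ with all the stated properties.

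The crux is the correction lemma, and within it the simultaneous control of (i) $|B|<\varepsilon'$, (ii) $\|P\|_{L^{1}}\le 3\|\tau\|_{L^{1}}$, and (iii) equal moduli of all nonzero coefficients. The difficulty is structural: when $m_{0}\ge 1$ the polynomial $P$ is orthogonal to $\psi_{0},\dots,\psi_{m_{0}-1}$, hence has integral zero over every $a$‑adic interval of rank $\le\lfloor\log_{a}m_{0}\rfloor$, so the integral of $\tau$ over each such interval must be absorbed on $B$ --- forcing $|P|$ to be of order $\|\tau\|_{L^{1}}/|B|$ there --- and this must be reconciled with the rigid requirement (iii). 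I would attack it by first decomposing $\tau=\sum_{\nu}\mu_{\nu}\chi_{\Delta_{\nu}}$ over disjoint $a$‑adic intervals; for a single term one exploits that $\chi_{\Delta}$ has a Walsh expansion all of whose coefficients have modulus $a^{-\operatorname{rank}\Delta}$, and that multiplying by a Rademacher function $\varphi_{s}$ (i.e.\ by $\psi_{a^{s}}$) shifts the spectrum up into a prescribed block while preserving the moduli, to build an elementary polynomial $R_{\nu}$ living in its own sub‑block, with constant coefficient modulus, equal to $\mu_{\nu}\chi_{\Delta_{\nu}}$ off a small set and to a high‑frequency ``compensating'' oscillation elsewhere, the scalings chosen so that $R_{\nu}$ and $\mu_{\nu'}\chi_{\Delta_{\nu'}}$ agree off a small set when $\nu\ne\nu'$; then $P=\sum_{\nu}R_{\nu}$ with the sub‑blocks ordered by decreasing modulus. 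The real labor is the $a$‑adic digit bookkeeping keeping every modulation inside the intended block and forcing all moduli to coincide, together with the measure estimates for the exceptional sets; this is the generalized‑Walsh analogue of the polynomial lemmas behind the $L^{1}$‑strong property, and I would follow the lines of \cite{G1}, \cite{E-1}, \cite{E-2}.
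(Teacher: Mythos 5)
Your plan is essentially the paper's own proof (Lemmas 2.1--2.2 plus iteration): approximate $f$ by step functions on $a$-adic intervals, correct each piece $\gamma\chi_{\Delta}$ by multiplying with a dilated mean-zero function $I^{(1)}_{\nu_0}(a^{s}x)$ so that the product's spectrum lands in a prescribed block with all nonzero coefficients of common modulus $|\gamma|\,|\Delta|$, order the sub-blocks so the moduli are non-increasing, and sum --- the paper merely packages the iteration as the universal-series Theorem~1.3, from which Theorem~1.4 is read off. The only point to soften is your demand that every nonzero coefficient of the $j$-th block have a freely prescribed common modulus $\gamma_{j}$: the achievable moduli are $|\mu_{\nu}|a^{-m_{\nu}}$, tied to the values and ranks of the pieces of $\tau_{j}$, so in general one can only arrange a non-increasing (not constant) profile across the block, which is the fallback you already note and is all that the monotonicity assertion requires.
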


\begin{theorem}\label{main-1.5}
For any $0<\varepsilon <1$ and each function $f\in L^1[0,1)$ one can
find a function $g\in L^1[0,1),\ mes\{x\in [0,1)\ ;\ g\neq
f\}<\varepsilon$, such that its fourier series by $\Psi_{a}$ system $
L^1$ convergence to $g(x)$ and the nonzero fourier coefficients
by absolute values monotonically decreasing, i.e. the greedy algorithm by $\Psi_a$ system $L^1$-convergence.
\end{theorem}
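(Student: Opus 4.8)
The strategy is to reduce Theorem 1.5 to Theorem 1.4 together with a convergence mechanism for series with monotone coefficients. The key observation is that Theorem 1.4 already produces a function $g$ coinciding with $f$ off a set of measure $<\varepsilon$ whose nonzero Fourier–Walsh coefficients $\{c_{n_k}(g)\}$ with respect to $\Psi_a$ are monotonically decreasing in absolute value. Once the coefficients are monotone, the greedy approximants $G_m(g,\Psi_a)$ are \emph{exactly} the partial-type sums obtained by taking the $m$ largest coefficients in order, so proving $L^1$-convergence of the greedy algorithm is the same as proving that $\sum_k c_{n_k}(g)\psi_{n_k}$ converges to $g$ in $L^1[0,1)$ when summed in the natural (coefficient-decreasing) order. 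Thus the whole theorem follows if I can arrange, in the construction behind Theorem 1.4, not merely monotonicity of the coefficients but monotonicity \emph{at a controlled rate}, fast enough to force $L^1$-convergence.

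First I would recall/establish the relevant Dirichlet-kernel estimates for $\Psi_a$: the generalized Walsh system has the property that the $L^1$ norms of the Dirichlet kernels $D_{a^n}=\sum_{j<a^n}\psi_j$ are uniformly bounded (indeed $D_{a^n}=a^n\mathbf 1_{[0,a^{-n})}$ has $L^1$ norm $1$), and more generally partial sums over "dyadic-type" blocks of length $a^n$ are well behaved. Using Abel summation, a series $\sum_k b_k\psi_{n_k}$ with $|b_k|\downarrow 0$ will converge in $L^1$ provided the coefficients decay fast enough relative to the growth of the indices $n_k$ and the $L^1$-norms of the associated partial Walsh sums; the classical argument (as for monotone Walsh–Fourier series) gives a Sidon-type / Telyakovskii-type sufficient condition. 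So the plan is: (i) redo the construction of $g$ from Theorem 1.4 keeping track of the indices $n_k$ where the nonzero coefficients sit; (ii) add to the construction a thinning/smoothing step so that between consecutive nonzero coefficients the ratio is controlled and the tail sums $\sum_{k\ge N}|b_k|$ times the relevant kernel norms go to zero.

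Concretely, I would build $g$ as a rapidly convergent sum $g=\sum_i \delta_i$ where each $\delta_i$ is a "correction" supported so as to change $f$ only on a small set and is a finite linear combination of $\psi_n$'s over a block of indices far beyond all previously used indices, with coefficient sizes chosen to be both (a) monotonically interleaved with everything before — this is where the block separation is used — and (b) summable after multiplication by the uniform kernel bound, so that the partial sums form a Cauchy sequence in $L^1$. The measure condition $mes\{g\neq f\}<\varepsilon$ is handled exactly as in Theorem 1.4, by distributing the allowed exceptional measure $\varepsilon$ as $\sum_i\varepsilon_i$ over the correction steps. Finally I would verify that the resulting full Fourier–Walsh series of $g$ consists precisely of these blocks with absolute values decreasing, conclude $G_m(g,\Psi_a)\to g$ in $L^1$, and note this is the assertion about the greedy algorithm.

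The main obstacle I anticipate is the simultaneous enforcement of the two competing requirements on the coefficients: global monotonicity of \emph{all} nonzero coefficients (a rigid, non-local constraint linking every step to every other) versus the quantitative decay needed for $L^1$-summability of the Abel-transformed series. Getting approximation of $f$ in $L^1$ (to keep $g\in L^1$ and close to $f$ off a small set) typically wants coefficients that are \emph{not} too small on the relevant blocks, while convergence of the greedy/monotone series wants them small; reconciling these forces a careful choice of the block lengths $a^{n_i}$ and a delicate bookkeeping so that each new correction's largest coefficient is smaller than the previous correction's smallest one yet the correction still moves $g$ close enough to $f$ on a large set. This balancing — essentially a quantitative refinement of the argument for Theorem 1.4 — is the technical heart of the proof.
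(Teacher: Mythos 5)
Your reduction is sound as far as it goes: once the nonzero coefficients of $g$ are strictly decreasing in modulus, the greedy approximants coincide with the partial sums of the nonzero subseries taken in index order, so the theorem does reduce to producing $g$ with monotone coefficients whose Fourier--Walsh series converges in $L^1$. But the proposal stops exactly where the proof has to begin: you name the central difficulty --- reconciling global monotonicity and approximation of $f$ with decay fast enough to force $L^1$-convergence --- and then defer it as ``the technical heart,'' so what you have is a plan rather than a proof. Moreover, the convergence mechanism you propose (Abel summation against Dirichlet-kernel norms, with a rate condition on the coefficients) pulls against the approximation requirement in a way you notice but do not resolve: to represent a building block $\gamma\chi_{\Delta}$ of $f$ one needs an entire block of coefficients each of modulus comparable to $|\gamma|\,|\Delta|$, and these cannot be made summable after multiplication by kernel norms without destroying the approximation.

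The paper resolves this with a specific device that is the missing idea in your plan. Each correction is the explicit product $P(x)=\gamma\,\chi_{\Delta}(x)\,I_{\nu_0}^{(1)}(a^{s}x)$ of Lemma 2.1: its spectrum occupies one block $[N_0,N]$, \emph{all} nonzero coefficients in the block have the same modulus $|\gamma|\,|\Delta|$ (so monotonicity inside a block is automatic, and across blocks it is just an ordering of the numbers $|\gamma_\nu||\Delta_\nu|$, condition (2.22)), $P=\gamma$ on most of $\Delta$, and --- crucially --- every intermediate partial sum of the block is bounded in $L^1$ by its $L^2$ norm via Cauchy--Schwarz, giving the bound $a|\gamma|\sqrt{|\Delta|/\varepsilon}$ of condition 5 in Lemma 2.1; this is made small by choosing $|\gamma_\nu|^2|\Delta_\nu|$ small as in (2.21), not by making individual coefficients decay against a kernel norm. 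Convergence of the resulting series then follows from the telescoping block estimates (3.25)--(3.26) in the inductive construction of the stronger Theorem 1.6, of which Theorem 1.5 is a corollary; no Abel summation or Dirichlet-kernel analysis is used. Without something playing the role of this $L^2$-to-$L^1$ control of intra-block partial sums, your balancing act does not close.
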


The Theorems 1.1 and 1.2 follows from next more general Theorem 1.3,
which in itself is interesting:

\begin{theorem}\label{main-1.6}
For any $0<\varepsilon <1$ there exists a measurable set $E\subset
[0,1)$ with $|E|>1-\varepsilon$ and a series by $\Psi_a$ system of
the form
$$
\sum_{i=1}^\infty c_i\psi _i(x), \ \ |c_i|\downarrow0
$$
such that for any function $f\in L^1[0,1)$ one can find a function
$g\in L^1[0,1)$,
$$g(x)=f(x);\ \ \hbox{if}\ \ x\in E
$$
and the series of the form
$$
\sum_{n=1}^\infty \delta_nc_n\psi_n(x),\ \ \hbox{where}\ \
\delta_n=0\ \ \hbox{or}\ \ 1,
$$
which convergence to $g(x)$ in $L^1[0,1)$ metric and
$$
\left|\left| \sum_{n=1}^m \delta_nc_n\psi_n(x \right|\right|_1\leq
12\cdot||f||_1,\ \ \forall m\geq 1.
$$
\end{theorem}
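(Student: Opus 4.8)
The plan is to reduce Theorem~\ref{main-1.6} to a single \emph{fundamental lemma} on the approximation of one elementary step function by a Walsh polynomial whose spectrum sits in a prescribed, arbitrarily high frequency window, and then to build the universal series $\sum c_i\psi_i$ and the set $E$ by iterating that lemma. The facts about $\Psi_a$ that I would use are: the Dirichlet kernel identity $D^{(a)}_{a^n}(x):=\sum_{k=0}^{a^n-1}\psi_k(x)=a^n\mathbf 1_{[0,a^{-n})}(x)$, so that the indicator of an $a$-adic interval $\Delta$ of rank $n$ is a Walsh polynomial $\mathbf 1_\Delta=a^{-n}\sum_{k=0}^{a^n-1}\overline{\psi_k(\Delta)}\,\psi_k$ with spectrum in $[0,a^n)$; the fact that $S_{a^n}$ is the conditional expectation $E_n$ onto the $\sigma$-algebra of rank-$n$ $a$-adic intervals, hence $\|S_{a^n}h\|_1\le\|h\|_1$ for all $h$; and the density in $L^1[0,1)$ of $a$-adic step functions with rational values. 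The fundamental lemma I would aim for reads, roughly: for every $a$-adic interval $\Delta$, every rational $\gamma$, every $\eta>0$ and every $N_0\in\mathbb N$ there exist a Walsh polynomial $Q=\sum_{k=N_0+1}^{N}\beta_k\psi_k$ and a measurable $e\subset\Delta$, $|e|<\eta$, with $Q(x)=\gamma$ on $\Delta\setminus e$ and $Q(x)=0$ off $\Delta$, with $\|Q\|_1\le 2|\gamma|\,|\Delta|$, with $0<|\beta_k|<\eta$ and $|\beta_k|$ non-increasing in $k$, and with $\max_{N_0<m\le N}\big\|\sum_{k=N_0+1}^{m}\beta_k\psi_k\big\|_1\le C|\gamma|\,|\Delta|$, $C$ absolute. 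One natural way to get $Q$: first replace $\gamma\mathbf 1_\Delta$, without altering it outside $\Delta$ and keeping the $L^1$-norm comparable, by $R=\gamma\mathbf 1_{\Delta\setminus e}+\gamma B\mathbf 1_e$ with $B=1-|\Delta|/|e|$ and $e$ equidistributed inside $\Delta$ at a fine $a$-adic scale, which kills the Walsh coefficients of $R$ below $a^{N_0}$; then present $R$ as the telescoping sum of its martingale differences $E_{M_{\nu+1}}R-E_{M_\nu}R$, which fills a contiguous high window, has partial sums $\le\|R\|_1$ at the nodes $a^{M_\nu}$, and only a controlled overshoot between nodes that is absorbed by spacing the $M_\nu$ so that the successive differences decay geometrically.

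Granting the lemma, the universal series and $E$ come from a cataloguing argument. Fix an enumeration $(\gamma_j,\Delta_j)_{j\ge 1}$ of all pairs (rational value, $a$-adic interval) in which every pair occurs infinitely often. Process the pairs in order; at step $j$ apply the fundamental lemma to $(\gamma_j,\Delta_j)$ with $N_0$ equal to the largest index used so far and with $\eta=\eta_j$ small enough that $\eta_j$ is below the last coefficient already produced (so the global coefficient sequence stays non-increasing) and that $\sum_j\eta_j<\varepsilon$; concatenate the resulting polynomials. The resulting series is the required $\sum_{i\ge 1}c_i\psi_i$ with $|c_i|\downarrow 0$, and $E:=[0,1)\setminus\bigcup_j e_j$ has $|E|>1-\varepsilon$.

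Now, given $f\in L^1[0,1)$, run the standard correction iteration. Put $f_1=f$; having $f_n$, choose a finite rational $a$-adic step function $h_n=\sum_i\gamma_i^{(n)}\mathbf 1_{\Delta_i^{(n)}}$ (disjoint $\Delta_i^{(n)}$) with $\|f_n-h_n\|_1$ as small as desired, and select (set $\delta_i=1$ on) the catalogue blocks corresponding to the finitely many pairs $(\gamma_i^{(n)},\Delta_i^{(n)})$ not yet used; this is possible because each pair occurs infinitely often and the blocks of distinct $\Delta_i^{(n)}$ have disjoint supports. Let $P_n$ be the resulting finite piece of $\sum c_i\psi_i$; by the lemma $P_n=h_n$ off a set $\widetilde e_n$ of total measure $<\varepsilon_n$ (chosen tiny), $\|P_n\|_1\le 2\|h_n\|_1$, and the partial sums inside $P_n$ are bounded by $C\|h_n\|_1$ plus the norms of its already-completed sub-blocks. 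Set $f_{n+1}=f-P_1-\cdots-P_n$; choosing $\|f_n-h_n\|_1$ and $|\widetilde e_n|$ small enough at each step gives $\|f_{n+1}\|_1\le 2^{-n}\|f\|_1$ and $\sum_n|\widetilde e_n|<\varepsilon$ with $\bigcup_n\widetilde e_n\subset[0,1)\setminus E$. Put $g=\sum_n P_n=\sum_i\delta_i c_i\psi_i$; since $\big\|f-\sum_{n\le k}P_n\big\|_1=\|f_{k+1}\|_1\to 0$ the series converges to $g$ in $L^1[0,1)$ and $g=f$ on $E$. For the partial-sum bound, a partial sum $\sum_{n=1}^m\delta_nc_n\psi_n$ equals $P_1+\cdots+P_{k-1}$ plus a partial sum inside the single block $P_k$ containing the index $m$ (blocks with $\delta\equiv 0$ contribute nothing); the first part has $L^1$-norm $\le\sum_n\|P_n\|_1\le 2\sum_n(\|f_n\|_1+\|f_n-h_n\|_1)\le C\|f\|_1$, and the inner partial sum has $L^1$-norm $\le C\|h_k\|_1+\|P_1+\cdots+P_{k-1}\|_1\le C\|f\|_1$ by the last clause of the lemma applied blockwise together with the fact that the completed sub-blocks of $P_k$ reconstruct a truncation of $h_k$. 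Tracking the constants (residuals halved, overshoot factor $2$) gives the bound $12\|f\|_1$ of the statement.

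The main obstacle is the fundamental lemma, and inside it the clause on uniformly bounded partial sums with an \emph{absolute} constant: one must transplant the spectrum of an essentially-indicator function into an arbitrarily high window while simultaneously (i) keeping the $L^1$-norm proportional to $|\gamma||\Delta|$, (ii) keeping all coefficients small with non-increasing moduli, and (iii) preventing the logarithmic (Lebesgue-constant) blow-up of the partial sums that a generic high-degree Walsh polynomial suffers. This is exactly what forces the telescoping/martingale presentation and a careful choice of the scales $M_\nu$ and of the ``thin'' equidistributed set $e$; everything else — the cataloguing argument and the iteration — is routine once the lemma is in hand.
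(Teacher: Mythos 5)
Your overall architecture --- a one-interval lemma that pushes the spectrum of $\gamma\mathbf 1_\Delta$ into a prescribed high window at the cost of a tall spike on a thin set, a catalogue built from rational $a$-adic data, and a correction iteration with selectors $\delta_n$ --- is the same as the paper's (your $R=\gamma\mathbf 1_{\Delta\setminus e}+\gamma(1-|\Delta|/|e|)\mathbf 1_e$ is literally the paper's $\gamma\chi_{\Delta}(x)I_{\nu_0}^{(1)}(a^sx)$ of Lemma 2.1). But your correction iteration fails already at the first step. Since the spectrum of each block $Q=\sum_{k=N_0+1}^{N}\beta_k\psi_k$ avoids $k=0$, we have $\int_0^1Q=0$, so the spike on $e$ must carry mass $|\gamma|\,(|\Delta|-|e|)$; hence $\|Q-\gamma\mathbf 1_\Delta\|_1\ge |\gamma|(|\Delta|-2|e|)$ no matter how small $|e|$ is --- shrinking $\widetilde e_n$ makes the spike taller, not lighter. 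Consequently $\|f_{n+1}\|_1=\|f_n-P_n\|_1\ge\|h_n-P_n\|_1-\|f_n-h_n\|_1\approx\|h_n\|_1\approx\|f_n\|_1$, and the claim $\|f_{n+1}\|_1\le 2^{-n}\|f\|_1$ is unattainable. It has to be: if $\sum_{n\le k}P_n\to f$ in $L^1$ then $g=f$ a.e.\ and every Fourier coefficient of $f$ would lie in $\{0,c_i\}$, which is false for generic $f$. A further symptom is that $\|P_n\|_1\approx2\|h_n\|_1$ does not tend to $0$, so $\sum_nP_n$ is not even Cauchy in $L^1$. The repair --- and this is the real content of the paper's Section 3 --- is to decompose $f=\sum_nf_{s_n}$ with $\|f_{s_n}\|_1\le\epsilon\,4^{-8(n+2)}$ for $n\ge2$, correct each \emph{small increment} separately, and absorb each spike into the target function by setting $g_q=f_{s_q}+[\overline g_{\nu_q}-f_{\nu_q}]$, the bracketed term being supported off $E$; the series then converges to $g=f+\sum_q[\overline g_{\nu_q}-f_{\nu_q}]$, which differs from $f$ exactly off $E$, rather than to $f$ itself.

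A secondary gap: your fundamental lemma demands $\max_m\bigl\|\sum_{k\le m}\beta_k\psi_k\bigr\|_1\le C|\gamma|\,|\Delta|$ with an absolute $C$, which you yourself flag as the main obstacle; the martingale-telescoping sketch does not deliver it (between the nodes $a^{M_\nu}$ the Walsh--Lebesgue constants grow logarithmically, and you have not shown that the martingale differences of your specific $R$ can be made to decay geometrically while keeping the support and value constraints). The paper avoids this entirely: Lemma 2.1(5) claims only the Cauchy--Schwarz bound $a|\gamma|\sqrt{|\Delta|/\varepsilon}$, and Lemma 2.2 compensates by requiring $|\gamma_\nu|^2|\Delta_\nu|<\frac{\varepsilon^3}{16a^2}\|f\|_1^2$ on each piece of the approximating step function, so that only the single incomplete block contributes the square-root term and it is already small. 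You should weaken your lemma to that form; the stronger statement is both unproved and unnecessary.
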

\begin{remark} \label{Rem1}
 Theorems 1.6 for classical Walsh system
$\Psi_2$ was proved by M. Grigorian \cite{G2}.
\end{remark}

\begin{remark}\label{Rem2}
 From Theorem 1.5 follows that generalized Walsh
system $\Psi_a$ has $L^1$-strong property.
\end{remark}
\par\par\bigskip

\section{Basic Lemmas}
\par\par\bigskip
First we present some properties of $\Psi_{a}$ system (see Definition 1.2).

{\bf Property 1.} Each  $n$th Rademacher function  has period
$1\over a^n$ and
\begin{equation}\label{2.1}
\varphi_n(x)=const \in \Omega_a=\{ 1, \omega_a,\omega_a^2,....,
\omega_a^{a-1} \},
\end{equation}
if  $ x\in \Delta_{n+1}^{(k)}= \left[ {k\over a^{n+1}}, {k+1\over
a^{n+1}}\right)$, $ k=0,...,a^{n+1}-1$, $ n=1,2,....$.

It is also easily verified, that
\begin{equation}\label{2.2}
\left( \varphi_n(x) \right)^k =\left( \varphi_n(x) \right)^m, \ \
\forall n,k\in \mathcal{N},  \textit{where}\ \  m=k\ (\textit{mod} \
a)
\end{equation}

{\bf Property 2.} It is clear, that for any integer $n$ the Walsh
function $\psi_n(x)$ consists of a finite product of Rademacher
functions and accepts values from $\Omega_a$.

{\bf Property 3.} Let $\omega_a=e^{2 \pi i \over a}$. Then for any natural
number $m$ we have
\begin{equation}\label{2.3}
\sum_{k=0}^{a-1}\omega_a^{k\cdot m}=
\begin{cases}a\ ,\  if \ m\equiv 0(\hbox {mod}\  a) , \cr \\
 \cr 0 ,\ \ if \ m\neq 0(\hbox {mod}\ \ a)\ . \cr
 \end{cases}
\end{equation}
{\bf Property 4.} The generalized Walsh system $\Psi_a$, $a\geq 2$
is a complete orthonormal system in $L^2[0,1)$
and basis in $L^p[0,1)$, $p>1$ \cite{Pa}).\\
{\bf Property 5.} From definition 2 we have
\begin{equation}\label{2.4}
\psi_i(x)\cdot\psi_j(a^sx)=\psi_{j\cdot a^s+i}(x)\ ,\ \hbox{where}\
0\leq i\ ,\ j<a^s ,
\end{equation}
and particulary
\begin{equation}\label{2.5}
\psi_{a^k+j}(x)=\varphi_k(x) \cdot \psi_j(x),\ \ if\ \ 0 \leq j \leq a^k-1.
\end{equation}

Now for any $m=1,2,...$ and $1\leq k\leq a^m$  we put
$\Delta_m^{(k)}=\left[{k-1\over a^m},{k \over a^m}\right)$ and
consider the following function
\begin{equation}\label{2.6}
I_m^{(k)}(x)=
\begin{cases}
1\ ,\ \hbox{if}\ x\in [0,1)\setminus \Delta_m^{(k)}\ ,\cr 1-a^m\ ,\
\hbox{if} \ x\in \Delta_m^{(k)} ,
\end{cases}
\end{equation}
and periodically extend these functions on $R^1$ with period 1.

By $\chi_E (x)$ we denote the characteristic function of the set
$E$, i.e.
\begin{equation}\label{2.7}
\chi_E(x)=
\begin{cases}
1\ ,\ \hbox{if}\ x\in E\ ,\ \cr 0\ ,\ \hbox{if}\ x\notin E\ .
\end{cases}
\end{equation}

Then, clearly
\begin{equation}\label{2.8}
I_m^{(k)}(x)=\psi_0(x)-a^m\cdot\chi_{\Delta_m^{(k)}}(x)\ ,
\end{equation}
and for the natural numbers $m\geq 1\ \hbox {and}\ 1\leq i\leq
a^m$

\begin{equation}\label{2.9}
a_i(\chi_{\Delta_m^{(k)}})=\int_0^1\chi_{\Delta_m^{(k)}}(x)\cdot{\overline
{\psi_i}(x)}dx= \mathcal{A} \cdot {1\over a^m} ,\ \  0\leq i<a^m .
\end{equation}

\begin{equation}\label{2.10}
b_i(I_m^{(k)})=\int_0^1I_m^{(k)}(x){\overline{\psi_i}}(x)dx=
\begin{cases}
0\ ,\ \hbox{if}\ i=0\ \hbox{and}\ i\geq a^k\ , \cr -\mathcal{A}\ ,\
\hbox{if}\ 1\leq i< a^k\,
\end{cases}
\end{equation}
where $\mathcal{A}=const\in \Omega_a$ and $|\mathcal{A}|=1$.

Hence
\begin{equation}\label{2.11}
\chi_{\Delta_m^{(k)}}(x)=\sum_{i=0}^{a^k -
1}b_i(\chi_{\Delta_m^{(k)}})\psi_i(x)\ ,
\end{equation}

\begin{equation}\label{2.12}
I_m^{(k)}(x)= \sum_{i=1}^{a^k - 1}a_i(I_m^{(k)})\psi_i(x)\ .
\end{equation}

\begin{lemma}\label{lem 2.1} For any numbers $\gamma \neq 0$, $N_0>1$,
$\varepsilon \in(0,1)$ and interval by order $a$
$\Delta=\Delta_m^{(k)}=[{k-1\over a^m},{k\over a^m}),\ \
i=1,...,a^m$ there exists a measurable set $E \subset \Delta$ and
a polynomial $P(x)$ by $\Psi_a$ system of the form
$$
 P(x) = \sum_{k=N_0}^N c_k\psi_k(x)
$$
which satisfy the conditions:
$$
\hbox{coefficients }\ \{c_k \}_{k=N_0}^N \ \ \hbox{equal}\ \ 0 \ \
\hbox{or}\ \ -\mathcal{K}\cdot \gamma\cdot |\Delta|, \leqno 1)
$$
where $\mathcal{K}=const\in \Omega_a$, $|\mathcal{K}|=1,$
$$
 |E|> (1- \varepsilon)\cdot |\Delta|,\leqno 2)
$$
$$
P(x)=
 \begin{cases}
\gamma,\ \ \hbox{if}\ \  x \in E ;\cr  0,\ \ \hbox{if}\ \  x
\notin \Delta.\cr
\end{cases}
\leqno 3)
$$
$$
{1\over 2}\cdot |\gamma| \cdot |\Delta|<\int_0^1|P(x)|dx<2\cdot
|\gamma| \cdot |\Delta|.\leqno 4)
$$
$$
\max_{N_0\leq m \leq N}\int_0^1\big|\sum_{k=N_0}^m
c_k\psi_k(x)\big|<a\cdot |\gamma| \cdot \sqrt{|\Delta|\over
\varepsilon}.\leqno 5)
$$
\end{lemma}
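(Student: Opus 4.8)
The plan is to write $P$ in almost closed form, as the product of $\gamma\chi_{\Delta}$ with one dilated function of the type $I_{m}^{(k)}$, and then read off all five properties from the identities (2.1)--(2.12). The motivation is that, by the computation behind (2.10)--(2.12), each $I_{m}^{(k)}$ is a Walsh polynomial with \emph{no} $\psi_{0}$-term and with all its nonzero coefficients of modulus $1$; multiplying such a polynomial by $\chi_{\Delta}$ localizes it to $\Delta$, and composing with a dilation $x\mapsto a^{s}x$ pushes its frequencies above any prescribed level, all while keeping the coefficients unimodular and the modulus of the function fully controlled.

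Concretely, I would let $t$ be the least integer with $a^{t}>1/\varepsilon$ (so $1/\varepsilon<a^{t}\le a/\varepsilon$ and $a^{-t}\le\tfrac12$), let $s$ be any integer with $s\ge\max\{m,t\}$ and $a^{s}\ge N_{0}$, put $\Delta_{t}^{(a^{t})}=[1-a^{-t},1)$, and set
$$
E=\Delta\cap\{x:\ \{a^{s}x\}\in[0,1-a^{-t})\},\qquad
P(x)=\gamma\,\chi_{\Delta}(x)\,I_{t}^{(a^{t})}(a^{s}x).
$$
Since $s\ge m$, the interval $\Delta$ splits into rank-$s$ subintervals on each of which $I_{t}^{(a^{t})}(a^{s}\cdot)$ traverses one full period; this yields immediately $|E|=(1-a^{-t})|\Delta|>(1-\varepsilon)|\Delta|$ (condition 2), $P\equiv\gamma$ on $E$ and $P\equiv0$ off $\Delta$ because $I_{t}^{(a^{t})}\equiv1$ outside $\Delta_{t}^{(a^{t})}$ (condition 3), and $\int_{0}^{1}|P|=|\gamma|\,|\Delta|\int_{0}^{1}|I_{t}^{(a^{t})}|=2(1-a^{-t})|\gamma|\,|\Delta|$, which lies strictly between $\tfrac12|\gamma|\,|\Delta|$ and $2|\gamma|\,|\Delta|$ (condition 4).

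For the coefficient structure I would expand $\chi_{\Delta}(x)=a^{-m}\sum_{i=0}^{a^{m}-1}\overline{\psi_{i}(\Delta)}\,\psi_{i}(x)$ (each $\psi_{i}$, $i<a^{m}$, being constant on $\Delta$ by Property~1) and $I_{t}^{(a^{t})}(y)=-\sum_{j=1}^{a^{t}-1}\overline{\psi_{j}(\Delta_{t}^{(a^{t})})}\,\psi_{j}(y)$, then multiply and apply the dilation identity (2.4), $\psi_{i}(x)\psi_{j}(a^{s}x)=\psi_{ja^{s}+i}(x)$ (valid since $i<a^{m}\le a^{s}$ and $j<a^{t}\le a^{s}$), to obtain
$$
P(x)=-\gamma a^{-m}\sum_{j=1}^{a^{t}-1}\sum_{i=0}^{a^{m}-1}
\overline{\psi_{i}(\Delta)}\ \overline{\psi_{j}(\Delta_{t}^{(a^{t})})}\ \psi_{ja^{s}+i}(x).
$$
The indices $ja^{s}+i$ are distinct and lie in $[a^{s},\,N]$ with $N:=(a^{t}-1)a^{s}+a^{m}-1$; declaring all remaining coefficients in $[N_{0},N]$ to be $0$, each nonzero coefficient equals $-\mathcal{K}\gamma|\Delta|$ with $\mathcal{K}=\psi_{i}(\Delta)\psi_{j}(\Delta_{t}^{(a^{t})})\in\Omega_{a}$, which is condition~1 (using $|\Delta|=a^{-m}$). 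Finally condition~5 follows from orthonormality: for $N_{0}\le\mu\le N$, $\|\sum_{k=N_{0}}^{\mu}c_{k}\psi_{k}\|_{1}\le\|\sum_{k=N_{0}}^{\mu}c_{k}\psi_{k}\|_{2}=(\sum_{k\le\mu}|c_{k}|^{2})^{1/2}$, and there are at most $a^{m}(a^{t}-1)<a^{m+t}$ nonzero coefficients, each of modulus $|\gamma|a^{-m}$, so every partial sum is bounded by $|\gamma|a^{(t-m)/2}=|\gamma|\sqrt{a^{t}|\Delta|}<a|\gamma|\sqrt{|\Delta|/\varepsilon}$.

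I expect the only genuinely delicate point to be this last estimate: the integer $t$ must be chosen so that $a^{-t}<\varepsilon$ (needed for condition~2) \emph{and} $a^{t}<a^{2}/\varepsilon$ (needed for condition~5 after the $L^{2}$-bound), and such a $t$ exists precisely because consecutive powers of $a$ differ by the factor $a\le a^{2}$, which leaves just enough room --- this is where the constant $a$ in condition~5 comes from. Everything else (verifying that $\psi_{i}$ is constant on $\Delta$ for $i<a^{m}$, that $I_{t}^{(a^{t})}$ has the stated Walsh expansion with no $\psi_{0}$-term, that $s\ge m$ aligns the periods and legitimizes (2.4), and the arithmetic behind conditions 3 and 4) is routine bookkeeping with the identities collected before the lemma.
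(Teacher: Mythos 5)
Your proposal is correct and follows essentially the same route as the paper: the author also defines $P(x)=\gamma\,\chi_{\Delta_m^{(k)}}(x)\,I_{\nu_0}^{(1)}(a^{s}x)$ with $a^{\nu_0}\approx 1/\varepsilon$, expands it via the dilation identity \eqref{2.4} to get unimodular-times-$\gamma|\Delta|$ coefficients at frequencies $ja^{s}+i\ge N_0$, and obtains condition 5 from the $L^1\le L^2$ bound and orthonormality. Your only deviations are cosmetic (using the last rather than the first rank-$t$ subinterval, and a slightly more careful choice of $s$, which if anything tightens the paper's argument).
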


\begin{proof}
 We take a natural numbers $\nu_0$ � $s$
so that
\begin{equation}\label{2.13}
\nu_0=\left[\log_a{\frac{1}{\varepsilon}}\right]+1;\ \  s=[\log_a N_0]
+ m .
\end{equation}
Define the coefficients $c_n$, $a_i$, $b_j$ and the function $P(x)$ in the following way:
\begin{equation}\label{2.14}
P(x)=\gamma\cdot \chi_{\Delta_m^{(k)}}(x)\cdot
I_{\nu_0}^{(1)}(a^sx),\ \ x\in [0,1]\ ,
\end{equation}
\begin{equation}\label{2.15}
c_n=c_n(P)=\int_0^1P(x)\overline{\psi_n}(x)dx\ ,\ \forall n\geq 0 ,
\end{equation}
\begin{equation}\label{2.16}
a_i=a_i(\chi_{\Delta_m^{(k)}})\ ,\ 0\leq i<a^m\ ,\ \
b_j=b_j(I_{\nu_0}^{(1)})\ ,\ 1\leq j< a^{\nu_0}\ .
\end{equation}
Taking into account \eqref{2.1}-\eqref{2.3}, \eqref{2.5}-\eqref{2.7}, \eqref{2.9}-\eqref{2.12} for $P(x)$ we obtain
\begin{equation}\label{2.17}
P(x)=\gamma\cdot \sum_{i=0}^{a^m - 1}a_i \psi_i(x)\cdot
\sum_{j=1}^{a^{\nu_0} - 1}b_j \psi_j(a^sx)=
\end{equation}
$$
=\gamma\cdot \sum_{j=1}^{a^{\nu_0 }- 1}b_j\cdot\sum_{i=0}^{a^m -
1}a_i \psi_{j\cdot a^s+i}(x) =\sum_{k=N_0}^ N c_k\psi_k(x)\ ,
$$
where
\begin{equation}\label{2.18}
c_k=c_k(P)=
\begin{cases} -\mathcal{K}\cdot {\gamma \over a^m}\ \hbox{or}\
0\ ,\ \hbox{if}\ k\in [N_0, N]\ \ \cr 0\ ,\qquad \hbox{if}\ k\notin
[N_0, N],
\end{cases}
\end{equation}
\begin{equation}\label{2.19}
\mathcal{K}\in \Omega_a,\ \ |\mathcal{K}|=1,\ \
N=a^{s+\nu_0}+a^m-a^s-1.
\end{equation}
Set
$$
E=\{x\in \Delta :P(x)=\gamma\}\ .
$$
By \eqref{2.7}, \eqref{2.8} and \eqref{2.14} we have
$$
|E|=a^{-m}(1-a^{-\nu_0})>(1-\epsilon)|\Delta|,
$$
$$
 P(x)=
 \begin{cases} \gamma \ ,\ \hbox{if}\ x\in E\ ,\cr
{\gamma(1-a^{\nu_0})}\ ,\ \hbox{if}\ x\in \Delta\setminus E\ ,\cr 0\
,\ \hbox{if}\ x\notin\Delta\ .
\end{cases}
$$
Hence and from \eqref{2.13} we get
$$
\int_0^1|P(x)|dx=2\cdot
|\gamma||\Delta|\cdot(1-a^{-\nu_0}),
$$
and taking into account that $a\geq 2$ we have
$$
{1\over 2}\cdot |\gamma| \cdot |\Delta|<\int_0^1|P(x)|dx<2\cdot
|\gamma| \cdot |\Delta|.
$$
From relations \eqref{2.13}, \eqref{2.18} and \eqref{2.19} we obtain
$$
\max_{N_0\leq m \leq N}\int_0^1\left|\sum_{k=N_0}^m
c_k\psi_k(x)\right|dx<\left[\int_0^1|P(x)|^2dx\right]^{1\over 2}\leq
\left[\sum_{k=N_0}^N c_k^2\right]^{1\over 2}=
$$
$$
=|\gamma| \cdot |\Delta|\cdot \sqrt{a^{\nu_0+s}+a^m}=|\gamma| \cdot
\sqrt{|\Delta|}\cdot \sqrt{a^{\nu_0}+1}<
$$
$$
<|\gamma| \cdot \sqrt{|\Delta|}\cdot \sqrt{a \over \varepsilon}<
a\cdot |\gamma| \cdot \sqrt{|\Delta|\over \varepsilon}.
$$

\end{proof}
\par\par\bigskip
\begin{lemma}\label{lem 2.2} For any given numbers $\ N_{0}>1$, $(N_0\in
\mathcal{N})$, $\varepsilon\in(0,1)$ and each function $f(x)\in L^1[0,1)$,
$||f||_1>0$ there exists a measurable set $E\subset [0,1)$, function
$g(x)\in L^1[0,1)$ and a polynomial by $\Psi_a$ system of the form
$$
P(x)=\sum_{k=N_{0}}^{N}c_{k}\psi _{n_k}(x),\ \ n_k\uparrow
$$
satisfying the following conditions:
$$
|E|>1-\varepsilon, \leqno 1)
$$
$$
 f(x)=g(x),\ {\hbox {��� ����}}\  \ x\in E, \leqno 2)
$$
$$
{1\over 2} \int_0^1|f(x)|dx< \int_0^1|g(x)|dx<
3\int_0^1|f(x)|dx.\leqno 3)
$$
$$
\int_0^1|P(x)-g(x)|dx<\varepsilon.\leqno 4)
$$
$$
\varepsilon>|c_k|\geq |c_{k+1}|>0.\leqno 5)
$$
$$
\max_{N_0\leq m \leq N}\int_0^1\left|\sum_{k=N_0}^m
c_k\psi_{n_k}(x)\right|dx<3\int_0^1|f(x)|dx.\leqno 6)
$$
\end{lemma}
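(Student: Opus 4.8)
The plan is to build $g$ and $P$ by a greedy-type exhaustion of $f$: approximate $f$ in $L^1$ by a step function constant on dyadic-of-order-$a$ intervals, and then replace each constant block by a polynomial produced by Lemma 2.1. First I would fix a simple function $h=\sum_{j=1}^{r}\gamma_j\chi_{\Delta^{(j)}}$ with $\Delta^{(j)}$ pairwise disjoint intervals of order $a$ (a partition of $[0,1)$ refined enough) such that $\|f-h\|_1$ is as small as we like, say $<\eta$ for a parameter $\eta=\eta(\varepsilon,\|f\|_1)$ to be chosen at the end; I may also arrange $\|h\|_1$ close to $\|f\|_1$, which will feed condition~3). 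The key point is that the intervals being disjoint means the polynomials we attach to different blocks have \emph{disjoint supports}, so their $L^1$ norms and partial sums add blockwise with no interaction, and the ordering of frequencies can be arranged by processing the blocks consecutively with ever-increasing frequency windows $[N_0,N_1],[N_1,N_2],\dots$ coming from~\eqref{2.19}.

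Next, to each block $\gamma_j\chi_{\Delta^{(j)}}$ I would apply Lemma~2.1 with that $\gamma_j$, that interval $\Delta^{(j)}$, a starting index $N_{j-1}$ (with $N_0$ the given one), and a small tolerance $\varepsilon_j$; this yields a set $E_j\subset\Delta^{(j)}$ with $|E_j|>(1-\varepsilon_j)|\Delta^{(j)}|$ and a polynomial $P_j(x)=\sum_{k=N_{j-1}}^{N_j}c_k\psi_k(x)$ with $P_j=\gamma_j$ on $E_j$, $P_j=0$ off $\Delta^{(j)}$, coefficients $0$ or $-\mathcal K_j\gamma_j|\Delta^{(j)}|$, and the bounds 4),5) of Lemma~2.1. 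I set $P=\sum_j P_j$, $E=\bigcup_j E_j$, and $g=f$ on $E$, $g=P$ on $[0,1)\setminus E$. Disjointness gives $|E|=\sum_j|E_j|>1-\sum_j\varepsilon_j|\Delta^{(j)}|>1-\varepsilon$ once $\sum_j\varepsilon_j|\Delta^{(j)}|<\varepsilon$, which is condition~1). Condition~2) is by construction. For condition~4), on $E$ we have $g-P=f-P$; but $P=h$ there (each $P_j=\gamma_j$ on $E_j$), so $|g-P|=|f-h|<\eta$ in $L^1$ over $E$, while off $E$ we have $g=P$ so the contribution is $0$; hence $\|P-g\|_1\le\|f-h\|_1<\eta\le\varepsilon$.

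For condition~3) I would estimate $\|g\|_1$: on $E$, $g=f$; on $\Delta^{(j)}\setminus E_j$, $g=P_j$ and $\int|P_j|$ is controlled by $2|\gamma_j||\Delta^{(j)}|$ from Lemma~2.1(4), while $\int_{\Delta^{(j)}}|h|=|\gamma_j||\Delta^{(j)}|$; choosing the $\varepsilon_j$ small and $h$ close to $f$ lets me sandwich $\|g\|_1$ between $\frac12\|f\|_1$ and $3\|f\|_1$ (the crude constants in the statement leave ample room). For the maximal partial-sum bound~6), any partial sum $\sum_{k=N_0}^{m}c_k\psi_{n_k}$ equals a full sum of finitely many completed blocks $P_1+\dots+P_{j-1}$ plus a partial sum inside the current block $P_j$; by disjoint supports the first part has $L^1$ norm $\le\sum_{i<j}\int|P_i|\le 2\sum_i|\gamma_i||\Delta^{(i)}|=2\|h\|_1$, and the in-block partial sum is bounded by Lemma~2.1(5) by $a|\gamma_j|\sqrt{|\Delta^{(j)}|/\varepsilon_j}$, which I make tiny by choosing $\varepsilon_j$ large enough \emph{relative to} $|\Delta^{(j)}|$ (say $\varepsilon_j=\sqrt{|\Delta^{(j)}|}$, so the in-block term is $\le a|\gamma_j||\Delta^{(j)}|^{1/4}$, summably small after refining the partition); hence the max is $<2\|h\|_1+o(1)<3\|f\|_1$. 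Finally condition~5): the nonzero coefficients within block $j$ all have modulus $|\gamma_j||\Delta^{(j)}|$, and I only need the sequence of these moduli, read in the processing order, to be nonincreasing and $<\varepsilon$; I can enforce this by \emph{ordering the blocks so that $|\gamma_j||\Delta^{(j)}|$ is nonincreasing in $j$}, and by a further harmless refinement of the partition (splitting a block into $a$ sub-blocks does not change $\gamma_j$ but multiplies $|\Delta^{(j)}|$ by $a^{-1}$) to make the first modulus $<\varepsilon$; Lemma~2.1 is applied to the blocks in that order with the running frequency index, so the global coefficient sequence $\{c_k\}$ inherits the monotonicity. The main obstacle is this bookkeeping: simultaneously getting the coefficient moduli monotone across blocks, keeping the frequency indices strictly increasing (which needs the windows from~\eqref{2.19} for consecutive blocks to be disjoint and ordered — true since each application starts at the previous $N$), and choosing the tolerances $\varepsilon_j$ in the right regime (small for set measure and for $\|g\|_1$, but not so small that $\sqrt{|\Delta^{(j)}|/\varepsilon_j}$ blows up in~6)); all of these are reconciled by first refining the partition enough and then ordering blocks by decreasing $|\gamma_j||\Delta^{(j)}|$.
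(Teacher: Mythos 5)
Your plan is correct and is essentially the paper's own proof: approximate $f$ by a step function on pairwise disjoint $a$-adic intervals whose blocks are ordered so that $|\gamma_j||\Delta_j|$ decreases, apply Lemma~2.1 to each block over consecutive frequency windows, and exploit the disjoint supports for conditions 1)--4) and 6). The only (immaterial) differences are that the paper sets $g=P+f-\varphi$ globally rather than $g=P$ off $E$, and it resolves the tension in 6) by imposing $|\gamma_\nu|^2|\Delta_\nu|<\frac{\varepsilon^3}{16a^2}\|f\|_1^2$ on the step function while using a single $\varepsilon$ in every application of Lemma~2.1, instead of your block-dependent tolerances $\varepsilon_j=\sqrt{|\Delta^{(j)}|}$.
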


\begin{proof}
 Consider the step function
\begin{equation}\label{2.20}
\varphi(x)=\sum_{\nu =1}^{\nu_0}\gamma _\nu\cdot \chi _{\Delta
_\nu}(x),
\end{equation}
where $\Delta _\nu$ are $a$-dyadic, not crosse intervals of the form $\Delta
_{m}^{(k)}=[{k-1\over a^m},{k\over a^m})$, $k=1,2,...,a^m$ so that
\begin{equation}\label{2.21}
0<|\gamma _\nu|^2|\Delta _\nu|<{\varepsilon^3 \over 16a^2}\cdot
\left(\int_0^1|f(x)|dx \right)^2.
\end{equation}
\begin{equation}\label{2.22}
0<|\gamma _{\nu_0}||\Delta _{\nu_0}|<...<|\gamma _\nu||\Delta
_\nu|<...<|\gamma _{1}||\Delta _{1}|<{\varepsilon \over 2}.
\end{equation}
\begin{equation}\label{2.23}
\int_0^1|f(x)-\varphi(x)|dx<\min \{{\varepsilon \over
4};{\varepsilon \over 4}\int_0^1|f(x)|dx \}.
\end{equation}
Applying Lemma 2.1 successively, we can find the sets
$E_\nu \subset [0,1)$ and a polynomial
\begin{equation}\label{2.24}
P_{\nu}(x)=\sum_{k=N_{\nu-1}}^{N_\nu-1}c_{k}\psi
_{n_k}(x),\ \ 1\leq \nu \leq \nu_0,
\end{equation}
which, for all $1\leq \nu \leq \nu_0$, satisfy the following conditions:
\begin{equation}\label{2.25}
|c_k|=|\gamma_{\nu}|\cdot |\Delta _{\nu}|,\ \ {\hbox {���}}\ \
k\in[N_{\nu-1},N_{\nu})
\end{equation}
\begin{equation}\label{2.26}
|E_{\nu}|>(1-\varepsilon )\cdot |\Delta _{\nu}|,
\end{equation}
\begin{equation}\label{2.27}
P_{\nu}(x)=
\begin{cases} \gamma _{\nu}\ :\quad \hbox{����}\ x\in \
E_{\nu}\cr 0\quad :\quad \hbox{����}\ x\notin \ \Delta _{\nu},
\end{cases}
\end{equation}
\begin{equation}\label{2.28}
{1\over 2}|\gamma_{\nu}|\cdot |\Delta
_{\nu}|<\int_0^1|P_{\nu}(x)|dx<2|\gamma_{\nu}|\cdot |\Delta _{\nu}|.
\end{equation}
\begin{equation}\label{2.29}
{\max_{N_{\nu-1}\leq m \leq N_{\nu}}\int_0^1\big|\sum_{k=N_0}^m
c_k\psi_{n_k}(x)\big|<a\cdot |\gamma_{\nu}}| \cdot
\sqrt{|\Delta_{\nu}|\over \varepsilon}.
\end{equation}
Define a set $E$, a function $g(x)$ and a polynomial $P(x)$ in the following away:
\begin{equation}\label{2.30}
P(x)=\sum_{\nu =1}^{\nu_0}P_{\nu}(x)=\sum_{k=N_{0}}^{N}c_{k}\psi
_{n_k}(x),\ \ N=N_{\nu_0}-1.
\end{equation}
\begin{equation}\label{2.31}
g(x)=P(x)+f(x)-\varphi(x).
\end{equation}
\begin{equation}\label{2.32}
E=\bigcup_{\nu=1}^{\nu_0}E_{\nu}.
\end{equation}

From \eqref{2.20},\eqref{2.23}, \eqref{2.26}-\eqref{2.28}, \eqref{2.30}-\eqref{2.32} we have

$$
|E|>1-\varepsilon \ ,
$$
$$
f(x)=g(x)\ ,\ \quad \hbox{ for}\ x\in E,
$$
$$
{1\over 2} \int_0^1|f(x)|dx< \int_0^1|g(x)|dx<
3\int_0^1|f(x)|dx.
$$
By \eqref{2.22}, \eqref{2.23}, \eqref{2.25} and \eqref{2.31} we get
$$
\int_0^1|P(x)-g(x)|dx=\int_0^1|f(x)-\varphi(x)|dx<\varepsilon.
$$
$$
\varepsilon>|c_k|\geq |c_{k+1}|>0,\ \ {\hbox {for}}\ \
k=N_0,N_0+1,...,N-1.
$$
That is, assertions 1)-5) of Lemma 2.2 actually hold. We now verify assertion 6). For any number  $m$, $N_0\leq m \leq N$
we can find $j$, $1\leq j \leq \nu_0$ such that $N_{j-1}<m\leq N_j$.
then by (2.24) and (2.30) we have
$$
\sum_{k=N_0}^m
c_k\psi_{n_k}(x)=\sum_{n=1}^{j-1}P_n(x)+\sum_{k=N_{j-1}}^m
c_k\psi_{n_k}(x).
$$
hence and from relations \eqref{2.21}, \eqref{2.23}, \eqref{2.28}, \eqref{2.29} we obtain
$$
\int_0^1 \left|\sum_{k=N_0}^m c_k\psi_{n_k}(x)\right|dx\leq
\sum_{\nu=1}^{\nu_0}\int_0^1|P_{\nu}(x)|dx+\int_0^1
\left|\sum_{k=N_{j-1}}^m c_k\psi_{n_k}(x)\right|dx<
$$
$$
<2\int_0^1|\varphi(x)|dx+a\cdot |\gamma_j|\cdot
\sqrt{|\Delta_j|\over
\varepsilon}<3\int_0^1|f(x)|dx.
$$
\end{proof}
\par\par\bigskip

\section{ Main results }
\par\par\bigskip

\begin {proof} Let
\begin{equation}\label{3.1}
\{ f_n(x)\}_{n=1}^\infty
\end{equation}
be a sequence of all step functions, values and constancy interval
endpoints of which are rational numbers. Applying Lemma 2.2
consecutively, we can find a sequences of functions
$\{ \overline{g}_n(x)\}$ of
sets $\{ E_n\}$ and a sequence of polynomials
\begin{equation}\label{3.2}
\overline{P}_n(x)=\sum_{k=N_{n-1}}^{N_n-1}c_{m_k}\psi _{m_k}(x),\ \
N_0=1,\ \ |c_{m_k}|>0
\end{equation}
which satisfy the conditions:
\begin{equation}\label{3.3}
|E_n|>1-\varepsilon\cdot 4^{-8(n+2)}
\end{equation}
\begin{equation}\label{3.4}
 f_n(x)=\overline{g}_n(x),\ {\hbox {for \ \ all}}\  \ x\in E_n,
\end{equation}
\begin{equation}\label{3.5}
{1\over 2} \int_0^1|f_n(x)|dx< \int_0^1|\overline{g}_n(x)|dx<
3\int_0^1|f_n(x)|dx.
\end{equation}
\begin{equation}\label{3.6}
\int_0^1|{\overline{P}}_n(x)-\overline{g}_n(x)|dx<4^{-8(n+2)}.
\end{equation}
\begin{equation}\label{3.7}
\max_{N_{n-1}\leq M \leq N_n}\int_0^1\left|\sum_{k=N_{n-1}}^M
c_{m_k}\psi_{m_k}(x)\right|dx<3\int_0^1|f_n(x)|dx.
\end{equation}
\begin{equation}\label{3.8}
{1\over n}>|c_{m_k}|> |c_{m_{k+1}}|>|c_{m_{N_n}}|>0.
\end{equation}
Set
\begin{equation}\label{3.9}
\sum_{k=1}^\infty c_{m_k}\psi _{m_k}(x)=\sum_{n=1}^\infty
\overline{P}_n(x)=
\sum_{n=1}^\infty\sum_{k=N_{n-1}}^{N_n-1}c_{m_k}\psi _{m_k}(x),
\end{equation}
 and
\begin{equation}\label{3.10}
E=\bigcap_{n=1}^\infty E_n.
\end{equation}
It is easy to see that (see \eqref{3.3}), $|E|>1-\varepsilon$.

Now we consider a series
$$
\sum_{i=1}^\infty c_i\psi _i(x)
$$
where $c_i=c_{m_k}$ ��� $i\in[m_k,m_{k+1})$. From \eqref{3.8} it follows that
$|c_i|\downarrow0$.

Let  given any function $f(x)\in L^1[0,1)$ then we can choose a subsequence
$\{ f_{s_n}(x)\}_{n=1}^\infty$ from \eqref{3.1} such that
\begin{equation}\label{3.11}
\lim_{N\to \infty}\int_0^1\left|\sum_{n=1}^Nf_{s_n}(x)-f(x)
\right|dx=0,
\end{equation}
\begin{equation}\label{3.12}
\int_0^1|f_{s_n}(x)|dx\leq \epsilon\cdot 4^{-8(n+2)}, n\geq 2,
\end{equation}
where
\begin{equation}\label{3.13}
\epsilon=\min\{ {\varepsilon\over 2},\int_E|f(x)|dx\}.
\end{equation}
We set
\begin{equation}\label{3.14}
g_1(x)=\overline{g}_{s_1}(x),\ \
P_1(x)=\overline{P}_{s_1}(x)=\sum_{k=N_{s_1-1}}^{N_{s_1}-1}c_{m_k}\psi
_{m_k}(x)
\end{equation}
It is easy to see that
$$
\int_0^1|f(x)-f_{k_1}(x)|<{\epsilon\over 2}
$$
Taking into account \eqref{3.5}, \eqref{3.7} and \eqref{3.14} we have
$$
\max_{N_{s_1-1}\leq M \leq
N_{s_1}}\int_0^1\left|\sum_{k=N_{s_1-1}}^M
c_{m_k}\psi_{m_k}(x)\right|dx<3\int_0^1|f_{s_1}(x)|dx<6\int_0^1|g_1(x)|dx.
$$
Then assume that numbers
$\nu_1,\nu_2,...,\nu_{q-1}$ ($\nu_1=s_1$), functions $g_n(x)$,
$f_{\nu_n}(x)$, $n=1,2,...,q-1$ and polynomials
$$
P_n(x)=\sum_{k=M_n}^{\overline{M}_n}c_{m_k}\psi _{m_k}(x),\ \
M_n=N_{\nu_n-1},\ \ {\overline{M}_n}=N_{\nu_n}-1,
$$
are chosen in such a way that the following condition is satisfied:
\begin{equation}\label{3.15}
g_n(x)=f_{s_n}(x),\ \ x\in E_{\nu_n},\ \ 1\leq n \leq q-1,
\end{equation}
\begin{equation}\label{3.16}
\int_0^1|g_n(x)|dx<4^{-3n}\epsilon,\ \ 1\leq n \leq q-1,
\end{equation}
\begin{equation}\label{3.17}
\int_0^1\left|\sum_{k=2}^n(P_k(x)-g_k(x))\right|dx<4^{-8(n+1)}\epsilon,\
\ 1\leq n \leq q-1,
\end{equation}
\begin{equation}\label{3.18}
\max_{M_n\leq M \leq \overline{M}_n}\int_0^1\left|\sum_{k=M_n}^M
c_{m_k}\psi_{m_k}(x)\right|dx<4^{-3n}\epsilon,\ \ 1\leq n \leq q-1.
\end{equation}
We choose a function $f_{\nu_q}(x)$ from the sequence (3.1) such
that
\begin{equation}\label{3.19}
\int_0^1\left|f_{\nu_q}(x)-\left[f_{s_q}(x)-\sum_{k=2}^n(P_k(x)-g_k(x))\right]\right|dx<4^{-8(q+2)}\epsilon.
\end{equation}
This with \eqref{3.11} imply
$$
\int_0^1\left|f_{\nu_q}(x)-\sum_{k=2}^n(P_k(x)-g_k(x))\right|dx<4^{-8q-1}\epsilon,
$$
and taking into account relation \eqref{3.19} we get
\begin{equation}\label{3.20}
\int_0^1|f_{\nu_q}(x)|dx<4^{-8q}\epsilon.
\end{equation}
We set
\begin{equation}\label{3.21}
P_q(x)=\overline{P}_{\nu_q}(x)=\sum_{k=M_q}^{\overline{M}_q}c_{m_k}\psi
_{m_k}(x),
\end{equation}
where
$$
 M_q=N_{\nu_q-1},\ \ {\overline{M}_q}=N_{\nu_q}-1,
$$
\begin{equation}\label{3.22}
g_q(x)=f_{s_q}(x)+[\overline{g}_{\nu_q}(x)-f_{\nu_q}(x)]
\end{equation}
By \eqref{3.4}-\eqref{3.7}, \eqref{3.17}-\eqref{3.22} we have
\begin{equation}\label{3.23}
g_q(x)=f_{s_q}(x),\ \ x\in E_{\nu_q},
\end{equation}
\begin{equation}\label{3.24}
\int_0^1|g_q(x)|dx\leq
\end{equation}
$$
\leq\int_0^1\left|f_{\nu_q}(x)-\left[f_{s_q}(x)-\sum_{k=2}^n(P_k(x)-g_k(x))\right]\right|dx+
$$
$$
+\int_0^1|\overline{g}_{\nu_q}(x)|dx+\int_0^1\left|\sum_{k=2}^n(P_k(x)-g_k(x))\right|dx<4^{-3n}\epsilon,
$$
\begin{equation}\label{3.25}
\int_0^1\left|\sum_{k=2}^q(P_k(x)-g_k(x))\right|dx\leq
\end{equation}
$$
\leq\int_0^1\left|f_{\nu_q}(x)-\left[f_{s_q}(x)-\sum_{k=2}^n(P_k(x)-g_k(x))\right]\right|dx+
$$
$$
+\int_0^1|\overline{P}_{\nu_q}(x)-\overline{g}_{\nu_q}(x))|dx<4^{-8(n+1)}\epsilon,
$$
\begin{equation}\label{3.26}
\max_{M_q\leq M \leq \overline{M}_q}\int_0^1\left|\sum_{k=M_q}^M
c_{m_k}\psi_{m_k}(x)\right|dx\leq
3\int_0^1|f_{\nu_q}(x)|dx<4^{-3n}\epsilon.
\end{equation}

Thus, by induction we can choose the sequences of sets
$\{E_q\}$, functions $\{g_q(x)\}$ and polynomials $\{P_q(x)\}$
such that conditions \eqref{3.23} - \eqref{3.26} are satisfied for all $q\geq
1.$
Define a function $g(x)$ and a series in the following away:
\begin{equation}\label{3.27}
g(x)=\sum_{n=1}^\infty g_n(x),
\end{equation}
\begin{equation}\label{3.28}
\sum_{n=1}^\infty
\delta_nc_n\psi_n(x)=\sum_{n=1}^\infty\left[\sum_{k=M_n}^{\overline{M}_n}c_{m_k}\psi
_{m_k}(x) \right],
\end{equation}
where
$$
\delta_n=
\begin{cases}1\ ,\ \hbox{if}\ i=m_k,\ \ \hbox{where}\ \ k\in \displaystyle{\bigcup_{q=1}^\infty[M_q,\overline{M}_q]} \cr \\
 \cr 0 ,\ \ \hbox{in the other case }. \cr
\end{cases}
$$
Hence and from relations \eqref{3.5}, \eqref{3.10}, \eqref{3.15}, \eqref{3.27} �������, ���
\begin{equation}\label{3.29}
g(x)=f(x),\ \ \hbox{���}\ \ x\in E,\ \ g(x)\in L^1[0,1),
\end{equation}
\begin{equation}\label{3.30}
{1\over 2} \int_0^1|f(x)|dx< \int_0^1|g(x)|dx< 4\int_0^1|f(x)|dx.
\end{equation}
Taking into account \eqref{3.21}, \eqref{3.24}-\eqref{3.28} we obtain that the series \eqref{3.28} convergence
to $g(x)$ in $L^1[0,1)$ metric and consequently is its Fourier series by $\Psi_a$ system, $a\geq 2$.

From Definition 1.3, and from relations \eqref{3.13}, \eqref{3.18}, \eqref{3.30} for any natural number $m$ there is
$N_m$ so that
$$
||G_m(g)||_1=||S_m(g)||_1=\int_0^1\left|\sum_{n=1}^\infty
\delta_nc_n\psi_n(x)\right|dx\leq4\int_0^1|f(x)|dx
$$
$$
\leq \sum_{n=1}^\infty\left( \max_{M_n\leq M \leq
\overline{M}_n}\int_0^1\left|\sum_{k=M_n}^M
c_{m_k}\psi_{m_k}(x)\right|dx\right)\leq
$$
$$
\leq 2\int_0^1|g_1(x)|dx+\epsilon\cdot\sum_{n=2}^\infty4^{-n}\leq
$$
$$
\leq 3\int_0^1|g(x)|dx\leq 12\int_0^1|f(x)|dx=12||f||_1.
$$

 \end {proof}
\par\par\bigskip
{\bf Acknowledgement.} The author thanks Professor M.G.Grigorian for his attention to
this paper.

\bibliographystyle{amsplain}

\end{document}